\let\oldbibliography\thebibliography
\renewcommand{\thebibliography}[1]{\oldbibliography{#1}\setlength{\itemsep}{0pt}}
\newtheorem{theorem}{Theorem}[section]
\newtheorem{lemma}{Lemma}[section]
\newtheorem{remark}{Remark}[section]
\newcommand{\R}{\mathbb R}
\begin{document}
\title{\textbf{On isolated singularities of the conformal Gaussian curvature equation and $Q$-curvature equation}\bigskip}

\author{\medskip Hui Yang\footnote{H. Yang is partially supported by NSFC grant 12301140 and the Institute of Modern Analysis-A Frontier Research Center of Shanghai.} \quad and \quad Ronghao Yang}

\date{\today} 

\maketitle

\begin{center}
\begin{minipage}{130mm} 

\begin{center}{\bf Abstract}\end{center}
In this paper, we study the isolated singularities of the conformal Gaussian curvature equation
\[
 -\Delta u = K(x) e^{u} \quad \textmd{in } B_{1} \setminus \{ 0 \}, 
\] 
where $B_1 \setminus \{ 0 \} \subset \mathbb{R}^2$ is the punctured unit disc. Under the assumption that the Gaussian curvature $K \in L^\infty(B_1)$ is nonnegative, we establish the asymptotic behavior of solutions near the singularity. When $K \equiv 1$, a similar result has been obtained by Chou and Wan (Pacific J. Math. 1994) using the method of complex analysis. Our proof is entirely based on the PDE method and applies to the general Gaussian curvature $K(x)$. Furthermore, our approach is also available for characterizing isolated singularities of the conformal $Q$-curvature equation $(-\Delta)^{\frac{n}{2}} u = K(x) e^{u}$ in any dimension $n\geq 3$. This equation arises from the prescribing $Q$-curvature problem.   

\medskip 

\noindent{\it Keywords}: Isolated singularities, conformal Gaussian curvature equation, conformal $Q$-curvature equation 

\medskip

\noindent {\it MSC (2020)}: 35J91; 35B40; 35A21   

\end{minipage}
\end{center}

\section{Introduction} 

Let $(M, g)$ be a two-dimensional Riemannian manifold and $K$ be a given function on $M$. The Nirenberg problem (or, prescribing Gaussian curvature problem) is the following: does there exist a metric $g_1$ conformal to $g$ on $M$ such that $K/2$ is the Gaussian curvature of the new metric $g_1$? If we write $g_1 = e^{u} g$, then the problem is equivalent to solving the following elliptic equation  
\begin{equation}\label{mfdeq}
    -\Delta_{g}u + 2k = Ke^{u} \quad \textmd{in } M,  
\end{equation}
where $\Delta_{g}$ is the Laplace-Beltrami operator on $M$ and $k$ is the Guassian curvature of $g$. When $M$ is compact or noncompact, the existence and nonexistence of \eqref{mfdeq} have been extensively studied, see, for example, \cite{CY,CLi2,CLi3,CL,CN,H,K,M} and the references therein. When $(M,g)$ is complete and noncompact, a natural model is $M= \R^{2}$ and $g$ is the standard Euclidean metric. In this case, the above equation reduces to 
\begin{equation}\label{R2}
    -\Delta u = K(x) e^{u} \quad \textmd{in } \R^{2}. 
\end{equation} 

When $K \equiv 1$, under the assumption that   
\begin{equation}\label{la}
    \int_{\R^{2}}e^{u(x)}dx<\infty, 
\end{equation}
Chen and Li \cite{CLi} established the classification of solutions of \eqref{R2} using the moving planes method. This classification result is significant for understanding the structure of the solutions set of the Gaussian curvature equation. To study \eqref{R2}, a fundamental problem is to characterize the asymptotic behavior of solutions at infinity. Using the Kelvin transform, the asymptotic behavior at infinity can be reduced to the following isolated singularity problem  
\begin{equation}\label{m=1eq}
    -\Delta u = K(x) e^{u} \quad \textmd{in } B_{1} \setminus \left \{ 0 \right \} \subset \mathbb{R}^2,  
\end{equation}
where $B_1 \setminus \{ 0 \}$ is the punctured unit disc. On the other hand, equation \eqref{m=1eq} also appears in the study of prescribing Gaussian curvature problem on a Riemannian surface with conical singularities (see, e.g., Chen-Li \cite{CLi3}). Assume that $K \equiv 1$ and 
\begin{equation}\label{la=abc}  
    \int_{B_1\setminus \{ 0 \}} e^{u(x)}dx< +\infty, 
\end{equation}
Chou and Wan \cite{CW} applied the method of complex analysis to show that there exists a constant $\alpha > -2$ such that 
\begin{equation}\label{jdgj}
u(x) = \alpha \ln |x| + O(1) \quad \textmd{as }  x \to 0. 
\end{equation}
Using the asymptotic estimate \eqref{jdgj}, Guo, Wan and Yang \cite{GWY} established an asymptotic expansion for $u$ near the origin. 

A natural question is: can one give a proof for the asymptotic behavior \eqref{jdgj} using the PDE method? Furthermore, under what conditions about $K(x)$, does the asymptotic behavior \eqref{jdgj} still hold for the equation \eqref{m=1eq}? In this paper, we will develop a PDE approach to study the asymptotic behavior of solutions of \eqref{m=1eq} near the isolated singularity. Under the assumption that $K \in L^\infty(B_1)$ is nonnegative, we have the following result.  

\begin{theorem}\label{m=1th}
Suppose that $K \in L^\infty(B_{1})$ is nonnegative and $u \in C^{2}(B_1 \setminus \{0\})$ is a solution of \eqref{m=1eq}. If $u$ satisfies \eqref{la=abc}, 
then there exists a constant $\alpha > -2$ and a H\"{o}lder continuous function $\varphi \in C^\gamma_{\textmd{loc}}(B_1)$ ($0< \gamma<1$) such that 
\begin{equation}\label{asy=01}
u(x) = \alpha \ln |x| + \varphi(x)  \quad  \textmd{near the origin}.  
\end{equation}
\end{theorem}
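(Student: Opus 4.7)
The plan is to decompose $u$ as $\alpha\ln|x|+\varphi(x)$ through Green's representation formula on $B_1$ and then upgrade the regularity of $\varphi$ by a bootstrap using the Brezis--Merle inequality. First, since $-\Delta u = Ke^u\ge 0$ in $B_1\setminus\{0\}$, $u$ is superharmonic there; combined with $Ke^u\in L^1(B_1)$, the ODE $(r\bar u'(r))'=-r\,\overline{Ke^u}(r)$ for the spherical mean $\bar u(r)$ shows that $\alpha:=\lim_{r\to 0^+}r\bar u'(r)$ exists and is finite, and that $u\in L^1_{\text{loc}}(B_1)$. I would then prove
\[
-\Delta u=K(x)e^u+c\,\delta_0\quad\text{in }\mathcal{D}'(B_1),
\]
with $c=-2\pi\alpha$, by passing to the limit $\epsilon\to 0$ in Green's identities on the annuli $B_r\setminus B_\epsilon$. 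This is a Brezis--Lions style removable singularity statement adapted to our setting.

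Using the Green's function $G(x,y)$ of $-\Delta$ on $B_1$ with zero Dirichlet data and its 2D expansion $G(x,y)=-\frac{1}{2\pi}\ln|x-y|+g(x,y)$ with $g$ smooth, Green's representation formula then yields
\[
u(x)=\alpha\ln|x|+\int_{B_1}G(x,y)K(y)e^{u(y)}\,dy+H(x),
\]
where $H(x)=cg(x,0)+h(x)$ with $h$ harmonic on $B_1$. Set $\varphi(x):=\int_{B_1}G(x,y)K(y)e^{u(y)}\,dy+H(x)$, so $u=\alpha\ln|x|+\varphi$. Since $G\ge 0$, $Ke^u\ge 0$, and $H$ is smooth, $\varphi$ is bounded below on $B_{1/2}$. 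Hence $e^u\ge C|x|^\alpha$ near the origin, and $\int_{B_1}e^u\,dx<\infty$ forces $\int_{B_{1/2}}|x|^\alpha\,dx<\infty$, i.e., $\alpha>-2$.

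It remains to prove $\varphi\in C^\gamma_{\text{loc}}(B_1)$. By Calder\'on--Zygmund regularity for the Green potential, this follows once one shows $Ke^u\in L^p(B_{1/2})$ for some $p>1$. The bootstrap proceeds as follows: localize to a ball $B_\rho$ small enough that $\|Ke^u\|_{L^1(B_\rho)}$ is small (by absolute continuity of the integral), and apply the Brezis--Merle inequality to the logarithmic potential of $Ke^u$ on $B_\rho$ to obtain $e^{q\varphi}\in L^1(B_\rho)$ for arbitrarily large $q$ when $\rho$ is small. Since $\alpha>-2$, the function $|x|^{p\alpha}$ belongs to $L^1(B_\rho)$ for $p$ in the range $(1,-2/\alpha)$ (or any $p<\infty$ if $\alpha\ge 0$). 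H\"older's inequality then gives $e^{pu}=|x|^{p\alpha}e^{p\varphi}\in L^1(B_\rho)$ for some $p>1$, hence $Ke^u\in L^p(B_\rho)$, and consequently $\varphi\in W^{2,p}_{\text{loc}}\hookrightarrow C^\gamma_{\text{loc}}$ by elliptic regularity.

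The main obstacle is the distributional step: justifying that the singular part of $-\Delta u-Ke^u$ on $B_1$ is precisely a scalar multiple of $\delta_0$ rather than a combination involving derivatives of $\delta_0$. The classical Brezis--Lions framework assumes $u\ge 0$, whereas here $u$ may be unbounded from below near the origin (when $\alpha>0$), so an adaptation is required, likely combining $u\in L^1_{\text{loc}}$ with the upper bound $u^+\le e^u\in L^1$ together with the annular Green's identity computation. A secondary difficulty is executing the Brezis--Merle bootstrap so that the exponential integrability $e^{q\varphi}\in L^1$ matches the singularity $|x|^{p\alpha}$ near the boundary $p=-2/\alpha$ of the admissible range.
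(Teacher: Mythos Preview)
Your overall strategy matches the paper's, including the Brezis--Merle bootstrap for H\"older regularity. The gap is exactly where you locate it, but your proposed fix does not close it. Knowing $u\in L^1_{\mathrm{loc}}(B_1)$ and $u^+\le e^u\in L^1$ only rules out $D^\beta\delta_0$ with $|\beta|\ge 2$, since $D^\beta\phi\notin L^1_{\mathrm{loc}}$ in two dimensions precisely for $|\beta|\ge 2$; a dipole term $a\cdot\nabla\delta_0$ survives these constraints because $x_j/|x|^2$ and its positive part both lie in $L^1(B_1)$. The harmonic example $u(x)=x_1/|x|^2$ already shows that superharmonicity plus $u^+\in L^1$ does not forbid dipoles, and your claim that $\varphi$ is bounded below presupposes the dipole-free representation, so it is circular. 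Passing to the limit in Green's identities on annuli likewise does not kill the first Fourier mode of $u$ on $\partial B_\epsilon$ without further input.

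The paper's resolution is different: it first accepts the representation $u=v+h+a_0\phi+\sum_{|\beta|=1}a_\beta D^\beta\phi$ with $v$ the Newtonian potential of $Ke^u$ and $h$ harmonic, and then uses $K\ge 0$ (so $v\ge 0$, hence $e^v\ge 1$) together with $\int_{B_{1/2}}e^u<\infty$ to deduce $\int_{B_{1/2}}\exp\bigl(a_0\phi+\sum_{|\beta|=1}a_\beta D^\beta\phi\bigr)\,dx<\infty$. A direct computation in polar coordinates over angular sectors, using that $\int_0^{1/2}r^s e^{b/r}\,dr=+\infty$ for every $b>0$ and every $s\in\mathbb{R}$, then forces each first-order coefficient to vanish; the remaining integral gives $a_0<4\pi$, i.e., $\alpha>-2$. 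This sector argument, which depends essentially on the nonnegativity of $K$, is the missing ingredient in your outline.
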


\begin{remark}\label{rem25}
(i) By the Kelvin transform $u(x) = v\big( \frac{x}{|x|^2} \big) - 4 \ln |x|$, Theorem \ref{m=1th} can be used to characterize the asymptotic behavior of solutions of the Gaussian curvature equation in an exterior domain
\begin{equation}\label{equinty}
-\Delta v = K(x) e^{v} \quad \textmd{in } \mathbb{R}^2 \setminus B_{1},  ~~~ \int_{\mathbb{R}^2 \setminus B_{1}} e^{v(x)} dx < \infty. 
\end{equation}
Let $K \in L^\infty(\mathbb{R}^2 \setminus B_{1})$ be nonnegative and $v \in C^2(\mathbb{R}^2 \setminus B_{1})$ be a solution of \eqref{equinty}. Then we have the following result: there exists a constant $\beta < -2$ such that $v(x) = \beta \ln |x| +  O(1)$ near infinity.

(ii) Theorem \ref{m=1th} can also be applied to unbounded function $K(x)$ with some growth restrictions at the origin. For example, suppose that $u \in C^{2}(B_1 \setminus \{0\})$ is a solution of 
\begin{equation}\label{equinty=02}
-\Delta u = |x|^{-\gamma} K(x) e^{u} \quad \textmd{in } B_{1} \setminus \{0\},  ~~ \int_{B_{1} \setminus \{0\}} |x|^{-\gamma} e^{u(x)} dx < \infty, 
\end{equation}
where $\gamma >0$ and $K \in L^\infty(B_{1})$  is nonnegative. Let $w(x)= u(x) - \gamma \ln |x|$. By Theorem \ref{m=1th}, there exists $\alpha > -2$ such that $w(x) = \alpha \ln |x| + \varphi(x)$ for some $\varphi \in C^\gamma_{\textmd{loc}}(B_1)$. Hence, $u(x) = (\alpha+\gamma) \ln |x| + \varphi(x)$ near the origin.  
\end{remark}

It is interesting to contrast Theorem \ref{m=1th} and the corresponding higher-dimensional result. For $n\geq 3$, the higher-dimensional analogue of \eqref{m=1eq} is the following conformal scalar curvature equation  
\begin{equation}\label{scalar=01} 
    -\Delta u = K(x) u^{\frac{n+2}{n-2}}, \quad u > 0  \quad \textmd{in } B_{1} \setminus \left \{ 0 \right \} \subset \mathbb{R}^n. 
\end{equation} 
Every solution of \eqref{scalar=01} induces a metric conformal to the flat metric of $\mathbb{R}^n$ such that $K(x)$ is the scalar curvature of the new metric (up to a positive constant). When $K \equiv 1$ and $0$ is a nonremovable singularity, Caffarelli, Gidas and Spruck \cite{CGS} proved that any solution $u \in C^2 (B_1 \setminus \{ 0 \})$ of \eqref{scalar=01} is asymptotically radially symmetric near $0$ and moreover, there exists a global singular solution $u_0$ of
\begin{equation}\label{gloeq10}   
\begin{cases}
- \Delta u_0 = u_0^\frac{n + 2}{n - 2}, ~~~ u_0 > 0 ~~~ \textmd{in} ~ \R^n \setminus \{ 0 \}, \\
u_0 \in C^2 (\R^n \setminus \{ 0 \}) ~~~ \textmd{and} ~~~ \lim_{x \to 0} u_0 (x) = + \infty
\end{cases}
\end{equation}
such that
\begin{equation}\label{asy=glo}
u(x) = u_0 (x) (1 + o(1)) \quad  \textmd{as} ~  x \to 0. 
\end{equation} 
The global singular solutions of \eqref{gloeq10} are usually called the Fowler solutions or Delaunay-type solutions. Furthermore, Korevaar-Mazzeo-Pacard-Schoen \cite{KMPS} obtained the refined asymptotics in \eqref{asy=glo} and expanded singular solution $u$ to the first order, and Han-Li-Li \cite{HLL} established the expansions up to arbitrary orders. When $K \in C^1(B_1)$ is a positive nonconstant function with $K(0)=1$, Chen-Lin \cite{CL97} and Zhang \cite{Zhang02} showed, under certain flatness conditions of $K$ near $0$, that every singular solution $u$ of \eqref{scalar=01} satisfies
\begin{equation}\label{upper}  
u(x) \leq C |x|^{-\frac{n-2}{2}} \quad\quad  \textmd{for }  x  \in B_{1/2} \setminus \{ 0 \}. 
\end{equation} 
Based on this upper bound, Chen-Lin \cite{CL99a,L00} and Taliaferro-Zhang \cite{TZ06} further proved that $u$ is close to a Fowler solution $u_0$ of \eqref{gloeq10} as $x \to 0$. Du and Yang \cite{DY} have recently established a higher-order expansion for $u$ near the origin. Compared to these results in \cite{CL97,CL99a,L00,TZ06,Zhang02}, our Theorem \ref{m=1th} only assumes that $K \in L^\infty(B_{1})$ is nonnegative, and does not need to assume that $K$ is of class $C^1$ and satisfies any flatness conditions. More interestingly, estimate \eqref{upper} and asymptotics \eqref{asy=glo} are unstable under $C^1$ perturbations of $K$. In fact, Taliaferro \cite{Taliaferro2005} has showed that for $n \geq 6$ and for any $\varepsilon>0$, there exists a positive function $K \in C^1(B_1)$ satisfying $\|K - 1\|_{C^1(B_1)}  \leq \varepsilon$ such that \eqref{upper} does not hold, i.e., $u(x) \neq O\big( |x|^{- (n-2)/2 } \big)$ as $x \to 0$. Moreover, this singular solution can be constructed to be arbitrarily large near the origin. For all $n \geq 3$, \eqref{upper} is also unstable under $C^0$ perturbations of $K$ (see Taliaferro-Zhang \cite{TaliaferroZhang2003}). However, it follows from our Theorem \ref{m=1th} that the asymptotics \eqref{asy=01} is stable even under $C^0$ perturbations of $K$.

Our method also applies to isolated singularities of the following higher-order conformal $Q$-curvature equation 
\begin{equation}\label{meq}
(-\Delta)^{\frac{n}{2}} u= K(x) e^{u} \quad \textmd {in } B_{1} \setminus \{0\} \subset \mathbb{R}^{n}, 
\end{equation}
where $n \ge 3$ is an integer. Equation \eqref{meq} arises from the problem of finding a metric conformal to the flat metric of $\mathbb{R}^{n}$ such that $K(x)$ is the $Q$-curvature of the new metric (up to a positive constant ). For the study of prescribing $Q$-curvature problem, we may refer to \cite{CP,DM,HMM,JMMX,MS,WX1,WX2} and the references therein. Notice that \eqref{meq} is a nonlocal higher-order equation in odd dimensions. We first consider the even-dimensional case $n =2m$, and the second result of this paper is   
\begin{theorem}\label{mth} 
Suppose that $n=2m \geq 4$ is an even integer and $K \in L^\infty(B_{1})$ is nonnegative. Let $u \in C^{2m}(B_1 \setminus \{0\})$ be a solution of \eqref{meq}. If $u$ satisfies \eqref{la=abc} and 
\begin{equation}\label{lg}
    \int_{B_r \setminus \{0\} }|u(x)|dx=o\left( r^{2m-2} \right) \quad  \textmd{as } r \to 0, 
\end{equation}
then there exists a constant $\alpha > -2m$ and a H\"{o}lder continuous function $\varphi \in C^\gamma_{\textmd{loc}}(B_1)$ ($0< \gamma<1$) such that 
\begin{equation}
u(x) = \alpha \ln|x| + \varphi(x)  \quad \textmd{near the origin}.  
\end{equation}
\end{theorem}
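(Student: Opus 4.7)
The strategy is to split $u$ into a logarithmic potential that absorbs the nonlinear source and a polyharmonic remainder with an isolated singularity, and then classify the remainder via the prescribed growth \eqref{lg}.

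First I introduce
\[
v(x) = \frac{1}{\gamma_n}\int_{B_1}\ln\frac{1}{|x-y|}K(y)e^{u(y)}\,dy,
\]
with $\gamma_n$ chosen so that $(-\Delta)^m v = Ke^u$ in $B_1$ distributionally. Since $K\in L^\infty$ and $e^u\in L^1(B_1)$, the density $f:=Ke^u$ lies in $L^1(B_1)$, and $v$ is well-defined with $v\in L^p_{\textmd{loc}}$ for every $p<\infty$. I next upgrade integrability through a polyharmonic Brezis--Merle / Adams-type inequality (in the spirit of the higher-order $Q$-curvature literature cited in the paper): on a sufficiently small ball $B_r$ around $0$, the smallness of $\int_{B_r}f$ yields $e^{q|v|}\in L^1(B_r)$ for some $q>0$, which bootstraps to $e^u\in L^p_{\textmd{loc}}(B_1)$ for some $p>1$. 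Standard polyharmonic estimates then place $v\in W^{2m,p}_{\textmd{loc}}(B_1)\hookrightarrow C^\gamma_{\textmd{loc}}(B_1)$ for some $\gamma\in(0,1)$.

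Set $w:=u-v$. Then $(-\Delta)^m w = 0$ in $B_1\setminus\{0\}$, and since $v$ is locally bounded one has $\int_{B_r}|v|\,dx=O(r^{2m})$, so the hypothesis \eqref{lg} transfers to $\int_{B_r}|w|\,dx=o(r^{2m-2})$ as $r\to 0$. I then invoke a B\^{o}cher-type classification for polyharmonic functions with an isolated singularity. Expanding $w=\sum_{k\geq 0}w_k(r)Y_k(\theta)$ into spherical harmonics and analyzing the ODE each radial coefficient satisfies, the only singular polyharmonic mode whose $L^1$ mass on $B_r$ is $o(r^{2m-2})$ is a scalar multiple of $\ln|x|$; every other singular fundamental mode (for instance $|x|^{-\beta}Y_k$ with $\beta\geq 2$, or iterated $|x|^{2j-n}\ln|x|$-type variants produced by repeated Laplacians) contributes at least a positive multiple of $r^{2m-2}$ to $\int_{B_r}|w|$ and is therefore excluded. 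Hence there exist $\alpha\in\R$ and a function $h$ polyharmonic across $0$ with $w(x)=\alpha\ln|x|+h(x)$ near the origin. Setting $\varphi:=h+v$ produces $u(x)=\alpha\ln|x|+\varphi(x)$ with $\varphi\in C^\gamma_{\textmd{loc}}(B_1)$. The bound $\alpha>-2m$ is immediate from $\int_{B_1}e^u<\infty$: near the origin one has $e^u=|x|^\alpha e^\varphi$ with $\varphi$ bounded, and $\int_0^1\rho^{n-1+\alpha}\,d\rho$ converges only when $\alpha>-n=-2m$.

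The principal obstacle I foresee is the B\^{o}cher-type classification step: the rate $o(r^{2m-2})$ in \eqref{lg} is precisely the borderline that separates $\ln|x|$ from every other singular polyharmonic mode, so the separation has to be carried out mode by mode and cannot be obtained by a one-line reduction to the classical harmonic B\^{o}cher theorem. A secondary, more routine, obstacle is producing a polyharmonic Brezis--Merle inequality that works under only $K\in L^\infty$ (rather than $K\in C^0$), but this should follow from the standard Adams-type argument applied directly to the log potential $v$.
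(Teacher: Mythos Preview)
Your overall strategy matches the paper's, but there is a genuine gap in the B\^{o}cher classification step. You assert that the only singular polyharmonic mode whose $L^1$ mass on $B_r$ is $o(r^{2m-2})$ is a multiple of $\ln|x|$; this is false. The first-order modes $\partial_i\phi(x)=c\,x_i/|x|^2$ (equivalently $r^{-1}Y_1(\theta)$ in your spherical-harmonic language) satisfy $\int_{B_r}|x|^{-1}\,dx\sim r^{2m-1}=o(r^{2m-2})$, so they pass the growth filter coming from \eqref{lg}. The paper handles this in two separate stages: condition \eqref{lg} eliminates $\sum_{|\beta|=j}a_\beta D^\beta\phi$ for each $2\le j\le 2m-1$ (since those contribute at least $\sim r^{2m-j}$ with $j\ge 2$), and then the integrability \eqref{la=abc} together with $K\ge 0$ (hence $v\ge 0$, so $e^v\ge 1$) is used to kill the $|\beta|=1$ terms via the same sector argument as in the $m=1$ proof. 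Without invoking $\int e^u<\infty$ and the sign of $K$ at this point you cannot rule out a contribution $c\,x_i/|x|^2$ in $w$, and your conclusion $w=\alpha\ln|x|+h$ does not follow.

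There is also a logical ordering problem. You place the regularity $v\in C^\gamma$ \emph{before} the classification of $w$, arguing that a Brezis--Merle inequality gives $e^{q|v|}\in L^1$ and that this ``bootstraps to $e^u\in L^p$''. But $e^u=e^ve^w$, and at that stage you know nothing about $w=u-v$, so the bootstrap is unjustified. The paper proceeds in the opposite order: first use only the crude bound $\int_{B_r}|v|\,dx\le Cr^{2m}\ln(1/r)$ (which follows directly from $f\in L^1$ and Fubini, with no regularity of $v$ required) to transfer \eqref{lg} to $w$ and classify $w=\alpha\ln|x|+h$ with $\alpha>-2m$; \emph{then} write $Ke^u=5^{-\alpha}Ke^h|x|^\alpha e^v$, apply the polyharmonic Brezis--Merle lemma to get $e^v\in L^p$ for every $p$, and combine with $|x|^\alpha\in L^{1+\varepsilon}$ to obtain $Ke^u\in L^{p_0}$ for some $p_0>1$, whence $v\in W^{2m,p_0}\hookrightarrow C^\gamma$.
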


When $m = 2$ and $K \equiv 1$, under the slightly stronger condition $|u(x)| = o(|x|^{-2})$, Guo-Liu \cite{GL} proved Theorem \ref{mth} and Guo-Liu-Wan \cite{GLW} further established the corresponding asymptotic expansion. Their proof is based on the spherical averaging technique and an ODE-type argument, which is entirely different from ours. In particular, our method applies to the general $Q$-curvature $K(x)$, as well as the higher-order case $m\ge 2$ and the nonlocal case $n\geq 3$. For the higher dimension $n > 2m$, the asymptotic behavior of conformal $Q$-curvature equation near isolated singularities has recently been studied in \cite{Ado,JX,JY,R}.

Next, we will consider the odd-dimensional case. Note that equation \eqref{meq} is nonlocal in this case. For the sake of convenience, we will understand the nonlocal operator $(-\Delta)^{\frac{n}{2}}$ in the sense of distributions. 
Denote 
$$
\mathcal{L}_s(\mathbb{R}^n):= \left\{ u \in L_{\textmd{loc}}^1(\mathbb{R}^n):  \int_{\mathbb{R}^n} \frac{|u(x)|}{1 + |x|^{n+2s}} dx < \infty  \right\}, \quad s>0. 
$$
We say that $u$ is a solution of \eqref{meq} if $u\in \mathcal{L}_{\frac{n}{2}}(\mathbb{R}^n)$, $K(x) e^u \in L_{\textmd{loc}}^1(B_1 \backslash \{0\})$, and $u$ satisfies \eqref{meq} in the following sense 
\begin{equation}\label{odd=02}
\int_{\mathbb{R}^n} u(x) (-\Delta)^{\frac{n}{2}} \varphi(x) dx = \int_{B_1} K(x) e^{u(x)} \varphi(x) dx ~~~~~ \textmd{for any} ~ \varphi \in C_c^\infty(B_1 \backslash \{0\}). 
\end{equation} 

\begin{theorem}\label{mth-odd} 
Suppose that $n\geq 3$ is odd and $K \in L^\infty(B_1)$ is nonnegative. Let $u \in \mathcal{L}_{\frac{n}{2}}(\mathbb{R}^n) \cap C^n(B_1 \setminus \{0\})$ be a solution of \eqref{meq}. If $u$ satisfies \eqref{la=abc} 
and 
\begin{equation}\label{lg=odd}
    \int_{B_r \setminus \{0\} }|u(x)|dx=o\left( r^{n-2} \right) \quad  \textmd{as } r \to 0, 
\end{equation}
then there exists a constant $\alpha > - n$ and a H\"{o}lder continuous function $\varphi \in C^\gamma_{\textmd{loc}}(B_1)$ ($0< \gamma<1$) such that 
\begin{equation}
u(x) = \alpha \ln|x| + \varphi(x)  \quad \textmd{near the origin}.  
\end{equation}
\end{theorem}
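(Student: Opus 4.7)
My plan is to carry over the PDE strategy developed for the even-dimensional case (Theorem \ref{mth}), treating the nonlocality of $(-\Delta)^{n/2}$ in odd dimensions entirely at the level of the distributional formulation \eqref{odd=02}.

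\textbf{Step 1: promoting the equation to all of $B_1$.} Since $u \in \mathcal{L}_{n/2}(\mathbb{R}^n)$, the map $\varphi \mapsto \int_{\mathbb{R}^n} u\,(-\Delta)^{n/2}\varphi\,dx$ defines a distribution $T$ on $B_1$ which, by \eqref{odd=02}, coincides with $Ke^u$ on $B_1\setminus\{0\}$. Hence $T - Ke^u\chi_{B_1}$ is supported at the origin and therefore equals $\sum_{|\beta|\le N} c_\beta\,\partial^\beta\delta_0$ for some finite $N$. Testing this identity against rescaled cutoffs $\eta_r(x) := \eta(x/r)$, and using the homogeneity $(-\Delta)^{n/2}\eta_r(x) = r^{-n}[(-\Delta)^{n/2}\eta](x/r)$ together with the $|x|^{-2n}$ off-support decay of $(-\Delta)^{n/2}\eta$, the growth hypothesis \eqref{lg=odd} forces $c_\beta = 0$ for every $|\beta| \ge 1$, leaving
\[
(-\Delta)^{n/2} u = Ke^u + c\,\delta_0 \quad \textmd{in } \mathcal{D}'(B_1)
\]
for a single constant $c \in \mathbb{R}$.

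\textbf{Step 2: potential decomposition and regularity of the harmonic part.} Let $\Phi(x) = -\gamma_n^{-1}\ln|x|$ be the fundamental solution of $(-\Delta)^{n/2}$ on $\mathbb{R}^n$, and set
\[
U(x) := \int_{B_1} \Phi(x-y)\,K(y)\,e^{u(y)}\,dy, \qquad \alpha := -c/\gamma_n.
\]
Because $Ke^u \in L^1(B_1)$ by \eqref{la=abc}, $U$ is well-defined and $(-\Delta)^{n/2}U = Ke^u\chi_{B_1}$ on all of $\mathbb{R}^n$. Consequently $H(x) := u(x) - \alpha\ln|x| - U(x)$ satisfies $(-\Delta)^{n/2}H = 0$ in $\mathcal{D}'(B_1)$, and interior hypoellipticity of $(-\Delta)^{n/2}$ on the class $\mathcal{L}_{n/2}$ yields $H \in C^\infty(B_1)$.

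\textbf{Step 3 and main obstacle: H\"older regularity of $U$.} The decomposition $u = \alpha\ln|x| + U + H$ reduces the theorem to proving $U \in C^\gamma_{\textmd{loc}}(B_1)$, after which $\alpha > -n$ follows from \eqref{la=abc} and the boundedness of $U + H$ near the origin. I would first use the absolute continuity in \eqref{la=abc} to pick $r_0 > 0$ with $\|Ke^u\|_{L^1(B_{r_0})}$ below the critical constant in a fractional Brezis--Merle inequality for $(-\Delta)^{n/2}$, such inequalities being available from the Adams--Moser--Trudinger estimates of Martinazzi and collaborators for the fractional Laplacian of order $n$. This yields $e^{pU} \in L^1_{\textmd{loc}}(B_{r_0})$ for arbitrarily large $p$. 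Combining this with the smoothness of $H$ and the pointwise identity $e^u = |x|^{\alpha}e^{U+H}$, a H\"older interpolation between the weight $|x|^{\alpha}$ and the exponential moments of $U$ produces $Ke^u \in L^q_{\textmd{loc}}(B_{r_0})$ for some $q>1$; standard H\"older estimates for the logarithmic Riesz potential then give $U \in C^\gamma_{\textmd{loc}}$. The hardest point is precisely this last step: establishing and closing the fractional Brezis--Merle bootstrap within the nonlocal distributional framework \eqref{odd=02}, since the standard iteration arguments used for integer-order polyharmonic operators (as in Theorem \ref{mth}) do not transfer directly and must be replaced by nonlocal counterparts.
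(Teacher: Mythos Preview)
Your overall strategy matches the paper's, but Step 1 contains an actual error: the growth condition \eqref{lg=odd} alone does \emph{not} kill the first-order Dirac terms. Testing against $\eta_r(x)=\eta(x/r)$ and using \eqref{lg=odd} yields only $\langle T,\eta_r\rangle = o(r^{-2})$ (the local piece contributes $r^{-n}\int_{B_{Cr}}|u| = o(r^{-2})$, and the nonlocal tail is no better), which is perfectly compatible with a surviving term $\sum_j c_j\,\partial_j\delta_0$ of size $O(r^{-1})$. Equivalently, a summand $b_j x_j/|x|^2$ in the decomposition of $u$ satisfies $\int_{B_r}|b_j x_j/|x|^2|\,dx \sim r^{n-1} = o(r^{n-2})$, so \eqref{lg=odd} cannot detect it. The paper handles $|\beta|=1$ exactly as in the proof of Theorem \ref{mth}: after eliminating $|\beta|\ge 2$ via \eqref{lg=odd}, one uses the nonnegativity of $K$ (so that $e^v\ge 1$) together with the finite-volume condition \eqref{la=abc} to obtain $\int_{B_{1/2}} \exp\bigl(\sum_{|\beta|\le 1} a_\beta D^\beta\phi\bigr)\,dx<\infty$, and a direct sector-by-sector computation in polar coordinates then forces the first-order coefficients to vanish.

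Your Step 3 is not the obstacle you make it out to be. Once you have the explicit logarithmic potential $U(x)=c_n\int_{B_1}\ln\frac{5}{|x-y|}\,K(y)e^{u(y)}\,dy$, the Br\'ezis--Merle argument goes through verbatim at the level of this integral formula, with no nonlocal machinery required: split $Ke^u=f_1+f_2$ with $\|f_1\|_{L^1}<\varepsilon$ and $f_2\in L^\infty$, apply Jensen's inequality to the $f_1$-potential to get $e^{p|v_1|}\in L^1(B_1)$ for any $p>1$, and bound the $f_2$-potential in $L^\infty$. This is precisely what the paper does, and it is completely indifferent to whether $n$ is even or odd because it never touches the operator $(-\Delta)^{n/2}$, only the kernel.
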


\begin{remark}\label{rem25=098} 
As pointed out in Remark \ref{rem25}, our Theorems \ref{mth} and \ref{mth-odd} can be applied to describe the asymptotic behavior for the conformal $Q$-curvature equation \eqref{meq} near infinity, as well as for unbounded function $K(x)$ with some growth restrictions at the origin. 
\end{remark}

Our method is unified for the conformal Gaussian curvature equation \eqref{m=1eq} and its higher-order analogue \eqref{meq} in any dimension $n \geq 3$. One of the main tools is the following characterization of isolated singularities for the Poisson equation. The case of nonnegative solutions has been studied in many literatures. For example, see Br\'ezis-Lions \cite{BL}, Ghergu-Moradifam-Taliaferro \cite{GMT}, Li \cite{Li96} and the references therein. To study the equations \eqref{m=1eq} and \eqref{meq}, we need to consider the solutions that may change sign.

\begin{theorem}\label{jdbs}  
Let $m$ and $n$ be positive integers with $n \geqslant 2m$. Suppose that $u \in C^{2m} (B_1 \setminus \{0\})$ is a solution of 
\begin{equation}\label{Poisson}
(-\Delta)^{m} u=f(x) \quad \textmd{in }  B_1 \setminus \{0\} \subset \mathbb{R}^{n}, 
\end{equation}
where $f$ satisfies
\begin{equation}\label{solutionl1}
\int_{B_1 \setminus \{0\}}|x|^{s}|f(x)|dx<+\infty 
\end{equation}
for some integer $0 \leq s \leq 2m-1$. If
\begin{equation}\label{solutionl123}
\int_{B_1 \setminus \{0\}} u^{+}(x)  dx < +\infty  
\end{equation}
with $u^{+}(x):= \max \left\{ u(x), 0 \right \} $, then there exist constants $a_{\beta} \in \mathbb{R}$ with $|\beta| \leq 2m-1$, such that 
\begin{equation}\label{udbseq}
u(x) = N(x) + h(x) + \sum_{|\beta| \leqslant 2m-1}a_{\beta} D^{\beta} \phi(x) \quad \textmd{for } x\in B_1 \setminus \{0\}, 
\end{equation}
where $N(x)$ is given in \eqref{Nx} and \eqref{Nx=763} below, $h \in C^{\infty}\left(B_{1} \right)$ is a solution of $(-\Delta)^{m} h=0$ in $B_{1} $, and $\phi$ is the fundamental solution of $(-\Delta)^{m}$ in $\mathbb{R}^{n}$. 
\end{theorem}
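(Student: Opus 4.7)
The plan is to reduce the theorem to the distributional classification of polyharmonic functions with an isolated singularity. First I would introduce the modified Newtonian potential $N(x)$ appearing in \eqref{Nx}: because \eqref{solutionl1} provides only a weighted bound on $f$, the raw convolution $\phi\ast f$ need not converge near the origin, so one subtracts from $\phi(x-y)$ its Taylor polynomial in $y$ at $y=0$ of order $s-1$ (no subtraction when $s=0$). The resulting kernel is $O(|y|^{s})$ as $y\to 0$ for each fixed $x\neq 0$, so \eqref{solutionl1} makes the integral absolutely convergent; the subtracted correction is a polynomial in $y$ with coefficients $D^{\alpha}_x\phi(x)$ that is annihilated by $(-\Delta)^{m}_x$ for $x\neq 0$, so $N\in C^{2m}(B_1\setminus\{0\})$ with $(-\Delta)^{m} N = f$ in $B_1\setminus\{0\}$.

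Set $v := u - N$. Then $v$ is $C^{2m}$ in $B_1\setminus\{0\}$ and satisfies $(-\Delta)^{m}v = 0$ there pointwise. The crucial intermediate step is to upgrade $v$ to $L^1_{\mathrm{loc}}(B_1)$. The one-sided hypothesis \eqref{solutionl123} together with upper estimates on $N$ gives $v^+\in L^1_{\mathrm{loc}}(B_1)$ directly; for $v^-$ one tests $(-\Delta)^m v = 0$ against radial cut-offs and combines the resulting spherical-average identity with a Brezis--Lions-type comparison against the Dirichlet solution of $(-\Delta)^m w = 0$ on small annuli. Once $v \in L^1_{\mathrm{loc}}(B_1)$, the distribution $(-\Delta)^m v \in \mathcal{D}'(B_1)$ is supported at $\{0\}$, hence takes the form $\sum_{|\beta|\leq N_0} c_\beta D^\beta \delta_0$ for finitely many $c_\beta$. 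The order bound $N_0 \leq 2m-1$ follows because $D^\beta \phi \in L^1_{\mathrm{loc}}$ precisely when $|\beta|\leq 2m-1$: if some $c_\beta$ with $|\beta|=2m$ were nonzero, then after subtracting any candidate distributional primitives the residue $v$ could not remain locally integrable, a contradiction.

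Choosing $a_\beta$ so that $(-\Delta)^m\bigl(\sum_{|\beta|\leq 2m-1} a_\beta D^\beta \phi\bigr)=(-\Delta)^m v$ in $\mathcal{D}'(B_1)$, the difference $h := v-\sum_{|\beta|\leq 2m-1}a_\beta D^\beta\phi$ is a distributional solution of $(-\Delta)^m h=0$ on the entire ball; by elliptic regularity (Weyl's lemma for the polyharmonic operator) $h \in C^\infty(B_1)$, which combined with the definition of $v$ gives the decomposition \eqref{udbseq}. The main obstacle I anticipate is the upgrade $v \in L^1_{\mathrm{loc}}(B_1)$ from the one-sided control \eqref{solutionl123} alone: sign-changing solutions could a priori develop very negative singularities not present on the final list of profiles $|x|^{2m-n-|\beta|}$ with $|\beta|\leq 2m-1$. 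Ruling such profiles out requires careful integration-by-parts on shrinking annuli around the origin, and one must track exactly which boundary terms arise from the $m$-fold Laplacian (not merely $-\Delta$) and show that all of them decay under the weighted bound \eqref{solutionl1} combined with \eqref{solutionl123}; this book-keeping, together with matching the weighted $L^1$ assumption on $f$ against the unweighted one-sided bound on $u$, is the technically heaviest part of the argument.
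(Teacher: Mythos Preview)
Your overall architecture matches the paper's exactly: build the Taylor-corrected potential $N$, set $w:=u-N$ (your $v$), show $w\in L^1_{\mathrm{loc}}(B_1)$, classify the distribution $(-\Delta)^m w$ supported at the origin, peel off $\sum a_\beta D^\beta\phi$ to leave a smooth polyharmonic remainder, and use $w\in L^1_{\mathrm{loc}}$ to cap the order at $2m-1$. You also correctly identify the $L^1_{\mathrm{loc}}$ upgrade from the one-sided bound as the crux.

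The difference lies in how that upgrade is executed. You propose testing against radial cut-offs and tracking the boundary terms arising from iterated integration by parts on shrinking annuli, flagging this yourself as the ``technically heaviest part''. The paper sidesteps all of that with a much lighter device: since $w$ is polyharmonic in $B_1\setminus\{0\}$, its spherical average $\bar w(r)$ solves the radial polyharmonic ODE. For $m=1$ this is $(r^{n-1}\bar w_r)_r=0$, giving $\bar w(r)=C_1+C_2\, r^{2-n}$ (or $C_1+C_2\ln(1/r)$ when $n=2$), which is integrable against $r^{n-1}\,dr$ on $(0,1)$; the general $m$ follows by induction. Hence $\lim_{r\to 0}\int_{B_1\setminus B_r} w\,dx$ exists, and combined with $\int w^+<\infty$ this forces $\int w^-<\infty$ immediately. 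No annular boundary-term bookkeeping is needed, and this step does not use \eqref{solutionl1} at all---the weighted hypothesis on $f$ is consumed entirely in constructing $N$.

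One smaller gap in your sketch: to conclude $w^+\in L^1$ you need $N\in L^1(B_1)$, not merely pointwise convergence of $N(x)$ for each fixed $x\neq 0$. The paper obtains this via the dual estimate $\int_{B_1}|\psi(x,y)|\,dx\le C|y|^s$ (Taylor remainder where $|y|<|x|/2$, direct bounds elsewhere) followed by Fubini against \eqref{solutionl1}. Your sketch mentions only the $O(|y|^s)$ behaviour of the kernel for fixed $x$, which by itself does not yield $N\in L^1$.
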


When $f \equiv 0$, Theorem \ref{jdbs} was proved by Futamura-Kishi-Mizuta \cite{FKM}. Here we will provide a different proof, the idea of which will also be applied to the nonlocal setting in Theorem \ref{mth-odd} (see Lemma \ref{jdbs=odd} in Section \ref{S03}). On the other hand, condition \eqref{solutionl1} allows $f$ not to be integrable in $B_1$, and thus Theorem \ref{jdbs} can be used to study \eqref{m=1eq} and \eqref{meq} when the finite volume assumption \eqref{la=abc} is not valid. We will discuss this problem in future work.

The paper is organized as follows. In Section \ref{S02}, we establish Theorem \ref{jdbs}. In Section \ref{S03}, we show Theorems \ref{m=1th}, \ref{mth} and \ref{mth-odd}.

\section{Representation formula}\label{S02}
Suppose that $m$ and $n$ are positive integers with $n \ge 2m$. A fundamental solution of $(-\Delta)^{m}$ in $\mathbb{R}^{n}$ is given by 
\begin{equation}\label{dyphi} 
\phi (x)=
c \begin{cases}
|x|^{2m-n}, ~~~ & n > 2m \geq 2,  \\ 
\ln  \frac{5}{|x|}, ~~~ & n=2m, 
\end{cases} 
\end{equation}
where $c=c(m,n)$ is a positive constant. Let $f$ satisfy \eqref{solutionl1} for some integer $0 \leq s \leq 2m-1$. 
Define $u^{+}(x) := \max \left\{u(x), 0\right \} $ and 
\begin{equation}\label{Nx}
N(x) : =\int_{\{|y| \leq 1\}}\psi(x, y)f(y) dy \quad \textmd{for } x \ne 0, 
\end{equation}
where 
\begin{equation}\label{Nx=763}
\psi(x, y) := \phi(x-y) - \sum_{|\beta| \leq s-1} \frac{(-y)^{\beta}}{\beta !} D^{\beta} \phi(x) \quad \textmd{for } x \ne 0, ~ y \neq x. 
\end{equation}
If $s=0$, then the summation in \eqref{Nx=763} does not appear.

\begin{proof}[Proof of Theorem \ref{jdbs}]  
Let $\psi(x, y)$ be defined as in \eqref{Nx=763}. First, we will prove that 
\begin{equation}\label{gjpsi}
\int_{B_1}|\psi(x,y)| dx \leqslant C|y|^{s},
\end{equation}
where $C>0$ is a constant.

For $|y| < \frac{|x|}{2}$ and $x \ne 0$, applying the Taylor formula we have 
\begin{equation}
    \psi(x,y)=\sum_{|\beta|=s} \frac{(-y)^{\beta}}{\beta !}  D^{\beta}\phi(x-\theta y),
\end{equation}
where $0 < \theta < 1$. A direct computation yields that 
\begin{equation}\label{psi01}
    |\psi(x,y)| \le C |y|^{s}|x|^{2m-n-s} \ln\frac{5}{|x|} \quad \textmd{for }  |y| < \frac{|x|}{2} \textmd{ and } x \ne 0,   
\end{equation}
where $C >0$ is a constant. Since $s \le 2m-1$, we have 
\[ 
\begin{aligned}
\int_{B_1}|\psi(x,y)|dx & \le C |y|^{s} \int_{\{ |y|<\frac{|x|}{2}, |x|<1\}}|x|^{2m-n-s} \ln\frac{5}{|x|}dx + \int_{\{|x| \le 2|y|, |x|<1\}}|\psi(x,y)|dx \\ 
& \le C \bigg[ |y|^{s} \int_{\{|x|<1\}}|x|^{2m-n-s} \ln\frac{5}{|x|}dx + \int_{\{ |x| \le 2|y| \}}|x-y|^{2m-n} \ln\frac{5}{|x-y|} dx \\
& ~~~~ + \sum_{|\beta| \le s-1}|y|^{|\beta|}\int_{\{ |x| \le 2|y| \} }|x|^{2m-n-|\beta|} \ln\frac{5}{|x|}dx \bigg] \\
&\leq C \bigg[ |y|^{s} + \int_{\{ |z| \le 3|y| \}}|z|^{2m-n} \ln\frac{5}{|z|} dz + |y|^{2m} \ln  \frac{5}{|y|} \bigg] \leq C |y|^{s}. 
\end{aligned}
\] 
Thus, \eqref{gjpsi} is proved.

Let $N(x)$ be defined as in \eqref{Nx}. Then, by the Fubini theorem, \eqref{solutionl1} and \eqref{gjpsi}, we get
\[ 
\begin{aligned}
\int_{B_1 } |N(x)|dx & \le  \int_{B_1} \bigg( \int_{\{|y| \leq 1\}} |\psi(x, y)| |f(y)| dy \bigg) dx  \\
& \le \int_{\{|y| \leq 1\}} \bigg( \int_{B_1}|\psi(x,y)|dx \bigg) |f(y)| dy \\
& \le C\int_{\{|y| \leq 1\}}|y|^{s}|f(y)|dy  < + \infty.  
\end{aligned}
\] 
Notice that 
\begin{equation}\label{laplace1}
(-\Delta)^m_x \psi(x,y) = 0 = (-\Delta)^m_y \psi(x,y) \quad \textmd{for } y \ne x, ~ x \ne 0. 
\end{equation} 
Let $r \in (0, 1)$ be arbitrarily fixed. For $r < |x| < 1$, we write $N(x)$ as
\[ 
N(x) = \int_{\{0< |y| < r/2 \}} \psi(x, y) f(y) dy + \int_{\{r/2  < |y| < 1 \}} \Big [\phi(x-y) - \sum_{|\beta| \leq s-1} \frac{(-y)^{\beta}}{\beta !} D^{\beta} \phi(x) \Big] f(y) dy. 
\]
By \eqref{psi01} and \eqref{solutionl1}, we can exchange the order of the first integral and the differentiation with respect to $x$. Thus, by \eqref{laplace1} we have 
\begin{equation}\label{NN0953}
(-\Delta)^m N (x) = f(x) \quad \textmd{for } r < |x| < 1. 
\end{equation} 
Since $r \in (0, 1)$ is arbitrary,  \eqref{NN0953} holds for all $x \in B_1 \setminus \{0\}$.

Now, we define 
\begin{equation}\label{dyv}
w(x):= u(x)-N(x) \quad \textmd{for } x \in B_1 \setminus \{0\}. 
\end{equation}
Then $w \in L_{\textmd{loc}}^{1}(B_1 \setminus \{0\}) $ and $w$ satisfies 
\begin{equation}
(-\Delta)^{m} w = 0 \quad \textmd{in  } B_1 \setminus \{0\} 
\end{equation}
in the sense of distributions. 
Moreover,  combining with \eqref{solutionl123}, we have  
\begin{equation}\label{w+09}
\int_{B_1 \setminus \{0\}} w(x)^{+} dx \le \int_{B_1 \setminus \{0\}} \big[ u(x)^{+} + |N(x)| \big] dx < +\infty. 
\end{equation}  
Let $\bar{w}(r)$ be the average of $w$ over the sphere $\partial B_r$, i.e., 
\[
\bar{w}(r) =  \frac{1}{s_n r^{n-1}} \int_{\partial B_r} w(x) ds, 
\]
where $s_n$ denotes the area of the unit sphere in $\mathbb{R}^n$. If $m=1$, then $\bar{w}$ satisfies 
\[
\frac{1}{r^{n-1}}( r^{n-1} \bar{w}_r )_r \equiv 0 \quad \textmd{in } (0, 1).
\]
This implies that 
\[
\bar{w}(r) = C_1 + C_2\begin{cases}
r^{2-n} \quad &\textmd{for } n\geq 3,\\
\ln \frac{1}{r} \quad &\textmd{for } n=2, 
\end{cases}
\]
and thus 
\begin{equation}\label{ww=001}
\int_0^1 \tau^{n-1} |\bar{w}(\tau)| d\tau < +\infty. 
\end{equation}    
For $m \geq 2$, one can also easily prove that \eqref{ww=001} is still true by mathematical induction. Hence, the limit 
\[
\lim_{r \to 0} \int_{B_1 \setminus B_r}  w(x)dx  = \lim_{r \to 0} s_n \int_{r}^1  \tau^{n-1} \bar{w}(\tau) d\tau ~ \textmd{exists}.  
\]
Combining with \eqref{w+09}, we obtain that 
\[
\int_{B_1 \setminus \{0\}}  w^{-}(x)dx = \lim_{r \to 0} \int_{B_1 \setminus B_r}  w^{+}(x)dx  - \lim_{r \to 0} \int_{B_1 \setminus B_r}  w(x)dx  ~ \textmd{exists},
\]
where $w^{-}(x):=\max\{ -w(x), 0\}$. Therefore, 
\begin{equation}\label{ww=00}
\int_{B_1 \setminus \{0\}} |w(x)| dx < +\infty.  
\end{equation}   
Thus $(-\Delta)^m w$ is a distribution in $\mathcal{D}^\prime(B_1)$ whose support is contained in $\{0\}$. By a classical result about distribution, there exists a nonnegative integer $K$ such that 
\[
(-\Delta)^m w =\sum_{|\beta| \leq K} a_\beta D^\beta \delta_0, 
\]
where $\delta_0$ is the Dirac measure at $0$. Let $h(x) := w(x) - \sum_{|\beta| \leqslant K} a_{\beta} D^{\beta} \phi(x) $. Then $h \in \mathcal{D}^\prime(B_1)$ satisfies \[
(-\Delta)^m h =  0 \quad \textmd{in } \mathcal{D}^\prime(B_1). 
\] 
From the classical regularity, we have that $h \in C^\infty(B_1)$ and $(-\Delta)^m h =  0$ in the  pointwise sense in $B_1$. Thus 
\[
\sum_{|\beta| \leqslant K} a_{\beta} D^{\beta} \phi(x) = w(x) -h(x) \in L_{\textmd{loc}}^1(B_1) 
\]
and so for any $k \geq 2m$,  we have $\sum_{|\beta| = k} a_{\beta} D^{\beta} \phi(x) \equiv 0$. Hence, we obtain that for some constants $a_{\beta} \in \mathbb{R}$ with $|\beta| \leq 2m-1$, 
\begin{equation}
w(x)=h(x) + \sum_{|\beta| \leqslant 2m-1}a_{\beta} D^{\beta} \phi(x)  \quad \textmd{for }  x \in B_1 \setminus \{0\}, 
\end{equation}
where $h \in C^{\infty}(B_{1})$ is the solution of $(-\Delta)^{m}h=0$ in $B_{1}$. Therefore, Theorem \ref{jdbs} follows from \eqref{dyv}.
\end{proof}  

\section{Isolated singularities}\label{S03} 

In this section, we prove Theorems \ref{m=1th}, \ref{mth} and \ref{mth-odd} based on Theorem \ref{jdbs}. We divide the section into three subsections to consider isolated singularities of Gaussian curvature equation, $Q$-curvature equation in even dimensions and $Q$-curvature equation in odd dimensions, respectively.

\subsection{Isolated singularities of Gaussian curvature equation}  

\begin{proof}[Proof of Theorem \ref{m=1th}]
In two dimensions, we recall that a fundamental solution of $-\Delta$ is given by $\phi(x) =\frac{1}{2\pi} \ln \frac{5}{|x|}$. Since $K\in L^{\infty} (B_1)$ and $u$ satisfies \eqref{la=abc}, using Theorem \ref{jdbs} with $s=0$ we obtain 
\begin{equation}\label{S03p=98}
u(x) = v(x) + h(x) + \sum_{|\beta| \leqslant 1} a_{\beta} D^{\beta} \phi(x) \quad \textmd{for }  x \in B_1 \setminus \{0\}, 
\end{equation}
where 
\begin{equation}\label{v90spo}
v(x) = \frac{1}{2\pi}\int_{\{ |y|<1\}}  \ln {\frac{5}{\left|x-y\right|}} K(y)e^{u(y)}dy,  
\end{equation}
and $h \in C^{\infty}\left(B_{1} \right)$ is a solution of $-\Delta h=0$ in $B_{1} $. Next, we will prove the following two conclusions.  
\begin{enumerate}[label = \rm(\roman*)]
\item $a_{\beta}= 0$ for $|\beta|=1$ and $a_0 < 4\pi$.  

\item $v \in C_{\textmd{loc}}^\gamma(B_1)$ for some $0< \gamma<1$. 
\end{enumerate}   

First, we verify the conclusion (i). By \eqref{la=abc}, we know 
\begin{equation}\label{keji}
\int_{B_{1/2} \setminus \{0\}} e^{v(x)} e^{h(x)} e^{\sum_{|\beta| \le 1}a_{\beta} D^{\beta} \phi (x)} dx < + \infty. 
\end{equation} 
The nonnegativity of $K$ implies $e^{v} \ge 1$ in $B_{1/2} \setminus \{0\}$, and $h \in C^{\infty}\left(B_{1}(0)\right)$ implies $M_1 < e^h < M_2$ in $B_{1/2}$ for two constants $M_1, M_2 >0$. Hence, \eqref{keji} yields 
\begin{equation}\label{keji2}
\int_{B_{1/2} \setminus \{0\}} e^{\sum_{|\beta| \le 1}a_{\beta} D^{\beta} \phi (x)}  dx < +\infty.    
\end{equation} 
Recall that $D^{(i,j)} \phi$ represents the $i$-th derivative of $\phi$ with respect to $x_1$ and the $j$-th derivative of $\phi$ with respect to $x_2$. Now we show that if $a_{(1,0)} \le 0$, then $a_{(0,1)}=0$. Consider the polar coordinates  
\[
\begin{cases} 
x_1 = r \cos \theta, \\ 
x_2=r \sin \theta. 
\end{cases} \quad 0\leq  r <1, ~ -\pi \leq \theta \leq \pi. 
\]
Set 
$$D_{1} = \left \{ (x_1, x_2) \in  B_{1/2} \setminus \{0\} \Big| 0 \le r  < \frac{1}{2}, ~ \frac{\pi}{4} \le \theta \le \frac{\pi}{2}\right \}.$$
By \eqref{keji2}, we obtain  
$$
\int_{D_1}\left( \frac{5}{\sqrt{x_1^2+x_2^2}} \right)^{\frac{1}{2 \pi} a_0} e^{\frac{1}{2 \pi}\left[a_{(1,0)}\left(-\frac{x_1}{x^2+x_2^2}\right)+a_{(0,1)}\left(-\frac{x_2}{x^2+x_2^2}\right)\right]} dx_1 dx_2< +\infty. 
$$
That is, 
$$
\int_{0}^{\frac{1}{2}}\int_{\frac{\pi}{4}}^{\frac{\pi}{2}}\left(\frac{5}{r}\right)^{\frac{1}{2 \pi} a_{0}} e^{\frac{1}{2 \pi}\left[a_{(1,0)}(-\frac{\cos\theta}{r})+a_{(0,1)}(-\frac{\sin\theta}{r})\right]} r d\theta dr<\infty.
$$
Using $a_{(1,0)} \le 0$ and $\frac{\pi}{4} \le \theta \le \frac{\pi}{2}$, we have 
$$
\int_{0}^{\frac{1}{2}}\int_{\frac{\pi}{4}}^{\frac{\pi}{2}}\left(\frac{5}{r}\right)^{\frac{1}{2 \pi} a_{0}} e^{\frac{1}{2 \pi}\left[a_{(0,1)}(-\frac{sin\theta}{r})\right]} r d\theta dr<\infty.
$$
Since $\sin\theta \ge \frac{\sqrt{2}}{2}$ in $D_{1}$, we must have $a_{(0,1)} \ge 0$ owing to 
$$
\int_{0}^{\frac{1}{2}} r^{a} e^{\frac{b}{r}} dr =+ \infty \quad  \forall~ a \in \mathbb{R}, ~ b>0. 
$$ 
Set 
$$D_{2} = \left \{ (x_1, x_2) \in B_{1/2} \setminus \{0\} \Big| 0 \le r < \frac{1}{2},  ~ -\frac{\pi}{2} \le \theta \le -\frac{\pi}{4}\right \}.$$
Then we can also get $a_{(0,1)} \le 0$. Hence, if $a_{(1,0)} \le 0$, then we have $a_{(0,1)} = 0$.

Similarly, if $a_{(1,0)} \ge 0$, then we can also show $a_{(0,1)} = 0$ by considering the integral in 
$$D_{3} = \left \{ (x_1, x_2) \in B_{1/2} \setminus \{0\} \Big| 0 \le r < \frac{1}{2},  ~ -\frac{3\pi}{4} \le \theta \le -\frac{\pi}{2}\right \}$$
and 
$$D_{4} = \left \{ (x_1, x_2) \in B_{1/2} \setminus \{0\} \Big| 0 \le r < \frac{1}{2},  ~ \frac{\pi}{2} \le \theta \le \frac{3\pi}{4}\right \},$$
respectively. Therefore, we always have $a_{(0,1)} = 0$. Thus, \eqref{keji2} implies  
$$
\int_{B_{1/2} \setminus \{0\} }\left( \frac{5}{\sqrt{x_1^2+x_2^2}} \right)^{\frac{1}{2 \pi} a_0} e^{\frac{1}{2 \pi}\left[a_{(1,0)}\left(-\frac{x_1}{x^2+x_2^2}\right)\right]} dx_1 dx_2< +\infty. 
$$
By direct computations, we obtain $a_{(1,0)} = 0$. Hence 
$$
\int_{B_{1/2} \setminus \{0\} }\left( \frac{5}{\sqrt{x_1^2+x_2^2}} \right)^{\frac{1}{2 \pi} a_0}  dx_1 dx_2 < +\infty, 
$$
which yields that $\frac{1}{2\pi}a_{0} < 2$, i.e., $a_0 < 4\pi$. This proves the first conclusion. 

Next, we prove the conclusion (ii). Let  
$$
\alpha := -\frac{1}{2\pi}a_0 > -2. 
$$ 
According to $K \in L^\infty(B_1)$ and \eqref{la=abc}, $v$ is a nonnegative solution of 
\begin{equation}\label{vdfc} 
-\Delta v = K(x)e^{u} = 5^{-\alpha} K(x)  e^{h(x)} |x|^{\alpha} e^{v} \quad \textmd{in } B_1 
\end{equation}
in the sense of distributions. Using $K \in L^\infty(B_1)$ and \eqref{la=abc} again, we have
\begin{equation}
    \int_{B_1 \setminus \{0\}} K(x)  e^{h(x)} |x|^{\alpha} e^{v(x)} dx<\infty.
\end{equation}
It follows from Br\'ezis-Merle \cite[Theorem 1, Corollary 1 and Remark 2]{BMF} that  $e^{v} \in L^{p}(B_{1/2})$ for each $p > 0$. Since $\alpha >-2$, choose some $\varepsilon>0$ such that $|x|^{\alpha} \in L^{1+\varepsilon}(B_{1/2})$. Hence, there exists some $p_0>1$ satisfying 
\[
K e^{h} |x|^{\alpha} e^{v}  \in L^{p_0} (B_{1/2}). 
\] 
By the classical $W^{2,p}$ estimates (see, e.g., Gilbarg-Trudinger \cite{GT}), we obtain $v \in W^{2,p_0}(B_{1/2})$. The Sobolev embedding theorem implies that $v \in C^\gamma(B_{1/2})$ for some $0< \gamma<1$. Hence, the conclusion (ii) holds. Combining with \eqref{S03p=98}, we now have
\[
u(x) = v(x) + h(x) + \alpha \ln |x| \quad \textmd{for }  x \in B_{1/2} \setminus \{0\}, 
\]
where $\alpha>-2$ and $\varphi := v+h \in C^\gamma(B_{1/2} )$ for some $0< \gamma<1$. The proof of Theorem \ref{m=1th} is completed.
\end{proof}

\subsection{Isolated singularities of $Q$-curvature equation: even dimensions}  

The method to prove Theorem \ref{m=1th} can also be applied to Theorem \ref{mth}. However, for $m\geq 2$, polyharmonic equations have higher-order singularities, and hence we need some new ideas to exclude these higher-order singularities.

\begin{proof}[Proof of Theorem \ref{mth}] 
In dimension $n=2m$, a fundamental solution of $(-\Delta)^{m}$ is given by $\phi(x) =c_m \ln \frac{5}{|x|}$, where $c_m>0$ is a constant. Suppose that $K\in L^{\infty} (B_1)$ and $u$ satisfies \eqref{la=abc}. By using Theorem \ref{jdbs} with $s=0$ we obtain  
\begin{equation}\label{S03pndju0}
u(x) = v(x) + h(x) + \sum_{|\beta| \leqslant 2m-1} a_{\beta} D^{\beta} \phi(x) \quad \textmd{for }  x \in B_1 \setminus \{0\}, 
\end{equation}
where 
\begin{equation}\label{v90sp9iuo0}
v(x) = c_m \int_{\{ |y|<1\}}  \ln {\frac{5}{\left|x-y\right|}} K(y)e^{u(y)}dy,  
\end{equation}
and $h \in C^{\infty}\left(B_{1} \right)$ is a solution of $(-\Delta)^m h=0$ in $B_{1} $. We will prove the following two conclusions. 
\begin{enumerate}[label = \rm(\roman*)]
\item $\sum_{1\leq |\beta| \leqslant 2m-1} a_{\beta} D^{\beta} \phi(x) \equiv 0$ and $a_0 < \frac{2m}{c_m}$.   

\item $v \in C_{\textmd{loc}}^\gamma(B_1)$ for some $0< \gamma<1$. 
\end{enumerate}    

First, we show the conclusion (i). By the Fubini theorem and \eqref{la=abc}, we obtain 
$$
\begin{aligned}
\int_{B_{r}} v(x)dx &\leq C \int_{B_{r}} \left( \int_{B_{1}} \ln \frac{5}{|x-y|}e^{u(y)}dy \right) dx\\
&\le C \int_{B_{1}} \left( \int_{B_{r}} \ln\frac{5}{|x-y|}dx \right) e^{u(y)} dy\\
&\le C  r^{2m} \ln\frac{1}{r}   \quad \textmd{for } 0< r < \frac{1}{2}. 
\end{aligned}
$$
This, together with \eqref{lg} and \eqref{S03pndju0}, implies  
\begin{equation}\label{near68hf}
\int_{B_{r}} \bigg| \sum_{|\beta|\le2m-1}  a_{\beta}D^{\beta}\phi(x) \bigg| dx=o(1)r^{2m-2} \quad \textmd{as } r \to 0.
\end{equation} 
Note that 
$$
\sum_{|\beta|=2m-1}a_{\beta}D^{\beta}\phi(x) = \frac{P_{2m-1}(x)}{|x|^{4m-2}}, 
$$
where $P_{2m-1}(x)$ is a homogeneous polynomial of order $2m-1$. If $P_{2m-1} \not \equiv 0$, then there exists $d_0 >0$ and a geodesic ball $U_0 \subset \mathbb{S}^{n-1}$ such that $|P_{2m-1}| \geq d_0>0$ in $U_0$. Set $x=r \theta$ with $\theta \in \mathbb{S}^{n-1}$, and $V_r=[0, r] \times U_0$. Then
$$
\int_{V_r} \bigg|  \sum_{|\beta|=2m-1}a_{\beta}D^{\beta}\phi(x) \bigg|  dx \geq d_1  r
$$
for some $d_1 >0$. On the other hand, a direct computation gives 
\[
\int_{B_{r}} \bigg| \sum_{|\beta|\le2m-2}  a_{\beta}D^{\beta}\phi(x) \bigg| dx \leq C r^2. 
\]
Thus, we have 
\[
\begin{aligned}
\int_{V_r} \bigg| \sum_{|\beta|\le2m-1}  a_{\beta}D^{\beta}\phi(x) \bigg| dx & \geq \int_{V_r} \bigg|  \sum_{|\beta|=2m-1}a_{\beta}D^{\beta}\phi(x) \bigg|  dx - \int_{V_r} \bigg| \sum_{|\beta|\le2m-2}  a_{\beta}D^{\beta}\phi(x) \bigg| dx \\
& \geq  \int_{V_r} \bigg|  \sum_{|\beta|=2m-1}a_{\beta}D^{\beta}\phi(x) \bigg|  dx - \int_{B_r} \bigg| \sum_{|\beta|\le2m-2}  a_{\beta}D^{\beta}\phi(x) \bigg| dx \\
& \geq d_1 r - C r^2\\
&\geq \frac{d_1}{2} r  \quad \textmd{for small } r > 0.  
\end{aligned}
\]
That contradicts \eqref{near68hf}. Hence, we have $P_{2m-1} \equiv 0$ and thus 
\begin{equation}\label{iki09}
\sum_{|\beta|=2m-1}a_{\beta}D^{\beta}\phi(x) \equiv 0.  
\end{equation} 
By repeating this process, we can obtain that 
\begin{equation}\label{i}
\sum_{|\beta|=j}a_{\beta}D^{\beta}\phi(x) \equiv 0 \quad \textmd{for each} ~ 2 \le j \le 2m-2. 
\end{equation}  
Now we show that $\sum_{|\beta|=1}a_{\beta}D^{\beta}\phi(x) \equiv  0$ as well. Using \eqref{la=abc}, \eqref{S03pndju0} and \eqref{i} we have 
\begin{equation}\label{kejimdy1}
\int_{B_{1/2} \setminus \{0\}}  e^{v(x)} e^{h(x)} e^{\sum_{|\beta| \le 1}a_{\beta} D^{\beta} \phi (x)} dx < +\infty.  
\end{equation}
Since $K(x)$ is non-negative and $h \in C^{\infty}\left(B_{1} \right)$, we have $e^{v} \ge 1$ and $e^h \geq M_1>0$ in $B_{1/2} \setminus \{0\}$. By \eqref{kejimdy1} we have 
\begin{equation}\label{keji2mdy1}
\int_{B_{1/2} \setminus \{0\}}  e^{\sum_{|\beta| \le 1}a_{\beta} D^{\beta} \phi (x)}  dx < +\infty.     
\end{equation}
Namely, 
\begin{equation}\label{keji2wmnj76}
\int_{B_{1/2} \setminus \{0\}}  \left( \frac{5}{|x|} \right)^{c_m a_0} e^{- c_m\sum_{i=1}^{2m} a_{i} \frac{x_i}{|x|^2}}  dx < +\infty,
\end{equation}
where $a_i = a_{(0, \cdots, 1, \cdots, 0)}$. Using a similar argument as in the proof of Theorem \ref{m=1th}, we obtain that $a_i=0$ for all $i=1, 2, \cdots, 2m$, and hence 
\begin{equation}\label{njd63}
\sum_{|\beta|= 1} a_{\beta} D^{\beta} \phi(x) \equiv 0.
\end{equation}
From \eqref{keji2wmnj76} we also have 
\[
\int_{B_{1/2} \setminus \{0\}}  \left( \frac{5}{|x|} \right)^{c_m a_0}  dx < +\infty, 
\]
which implies $c_m a_0 < 2m$. This, together with \eqref{i} and \eqref{njd63}, gives the proof of conclusion (i).

Next, we prove the conclusion (ii). Let  
$$
\alpha := -c_m a_0 > -2m.  
$$ 
According to $K \in L^\infty(B_1)$ and \eqref{la=abc}, $v$ is a nonnegative solution of 
\begin{equation}\label{vdfcm2} 
(-\Delta)^m v = K(x)e^{u} = 5^{-\alpha} K(x)  e^{h(x)} |x|^{\alpha} e^{v} \quad \textmd{in } B_1 
\end{equation}
in the sense of distributions. Using $K \in L^\infty(B_1)$ and \eqref{la=abc} again, we have
\begin{equation}
    \int_{B_1 \setminus \{0\}} K(x)  e^{h(x)} |x|^{\alpha} e^{v(x)} dx<\infty.
\end{equation}

To estimate $v$, we need the following result. Its proof is inspired by Br\'ezis-Merle \cite{BMF}. 
\begin{lemma}\label{lemma3-1}
Suppose that $m$ is a positive integer. Let $v$ be a solution of   
\begin{equation}\label{gjbsfc}
(-\Delta)^{m} v = f(x) \quad \textmd{in }  B_1 \subset \mathbb{R}^{2m}, 
\end{equation} 
with $f \in L^1_{\textmd{loc}}(B_1)$. Then for every $ p > 1$,
\begin{equation}\label{eulpkj}
e^{p|v|} \in L^1_{\textmd{loc}}(B_1). 
\end{equation}    
\end{lemma}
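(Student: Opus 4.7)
The plan is to adapt the Br\'ezis--Merle argument \cite{BMF} to the polyharmonic setting by decomposing $v$ into a potential-theoretic part generated by $f$ and a polyharmonic remainder. Fix any $x_0 \in B_1$ and choose $r > 0$ small enough that $\overline{B_{2r}(x_0)} \subset B_1$ and (crucially)
\[
A := \|f\|_{L^1(B_{2r}(x_0))} < \frac{2m}{p c_m},
\]
which is possible because $f \in L^1_{\textmd{loc}}(B_1)$. Since the conclusion is purely local, a covering argument reduces matters to showing $e^{p|v|} \in L^1(B_r(x_0))$ for such $r$.

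I would then introduce
\[
w(x) := c_m \int_{B_{2r}(x_0)} \ln\frac{5}{|x-y|}\, f(y)\, dy,
\]
i.e., the convolution of the fundamental solution with $f\,\chi_{B_{2r}(x_0)}$. Then $(-\Delta)^m w = f\,\chi_{B_{2r}(x_0)}$ distributionally on $\mathbb{R}^{2m}$, so $v - w$ is a distributional polyharmonic function in $B_{2r}(x_0)$. Standard interior regularity for the polyharmonic operator (a Weyl-type lemma) gives $v - w \in C^\infty(B_{2r}(x_0))$, and in particular $|v - w| \leq C$ on $\overline{B_r(x_0)}$ for some constant $C$. Consequently $e^{p|v|} \leq e^{pC} e^{p|w|}$, and the task reduces to estimating $\int_{B_r(x_0)} e^{p|w|} dx$.

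For $x, y \in B_1$ one has $\ln(5/|x-y|) > 0$, hence
\[
|w(x)| \leq c_m A \int_{B_{2r}(x_0)} \ln\frac{5}{|x-y|}\, d\mu(y),
\]
where $d\mu(y) := A^{-1} |f(y)|\,dy$ is a probability measure. Applying Jensen's inequality to the convex function $t \mapsto e^t$ yields
\[
\exp(p|w(x)|) \leq \int_{B_{2r}(x_0)} \left( \frac{5}{|x-y|} \right)^{p c_m A}\, d\mu(y),
\]
and then integrating over $x \in B_r(x_0)$ and invoking Fubini gives
\[
\int_{B_r(x_0)} e^{p|w(x)|}\, dx \leq \frac{5^{p c_m A}}{A} \int_{B_{2r}(x_0)} |f(y)| \left( \int_{B_r(x_0)} |x-y|^{-p c_m A}\, dx \right) dy.
\]
Since $p c_m A < 2m$, the inner integral is bounded uniformly in $y$, giving the desired finite bound.

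The main technical points are the distributional identity $(-\Delta)^m w = f\,\chi_{B_{2r}(x_0)}$ and the $C^\infty$-regularity of the polyharmonic remainder $v - w$; both are classical consequences of properties of the fundamental solution and elliptic regularity for polyharmonic equations, so no serious obstacle is expected beyond bookkeeping.
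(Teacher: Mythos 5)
Your proof is correct, and it follows a genuinely different route from the paper even though both are Br\'ezis--Merle type arguments. The paper decomposes $v = w + h$ on a ball $B_r$ by solving a Navier boundary value problem ($(-\Delta)^m w = f$ in $B_r$, $(-\Delta)^i w = 0$ on $\partial B_r$), then proves the pointwise comparison $|w| \leq N$ (where $N$ is the logarithmic potential of $|f|$) via an iterated maximum principle; this requires checking that $(-\Delta)^i N \geq 0$ for $i = 1, \dots, m-1$. To reach arbitrary exponents $p>1$, the paper then splits $f = f_1 + f_2$ with $\|f_1\|_{L^1}$ small and $f_2 \in L^\infty$. You instead subtract off the free-space Newtonian potential $w = \phi * (f\chi_{B_{2r}(x_0)})$ and invoke hypoellipticity (Weyl's lemma for $(-\Delta)^m$) to control the polyharmonic remainder $v - w$ in $L^\infty_{\mathrm{loc}}$; and you achieve large $p$ by shrinking $r$ so that $\|f\|_{L^1(B_{2r}(x_0))} < 2m/(pc_m)$, which is available by absolute continuity of the integral. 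Both devices (splitting $f$, shrinking $r$) accomplish the same thing; your route avoids the polyharmonic maximum principle entirely and lets you apply Jensen directly to $w$, which is a mild streamlining, while the paper's choice of the Navier problem makes the comparison function $N$ explicit and avoids any appeal to hypoellipticity. One small bookkeeping point worth making explicit in your write-up: the inner integral $\int_{B_r(x_0)} |x-y|^{-pc_m A}\,dx$ is bounded uniformly over $y \in B_{2r}(x_0)$ precisely because $pc_m A < 2m$ and $|x - y| < 3r$, so a uniform bound of the form $C(m) (3r)^{2m - pc_m A}/(2m - pc_m A)$ holds; combined with $\mu$ being a probability measure this closes the estimate.
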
 

\begin{proof} Let $r \in (0, 1)$ be arbitrarily fixed. Consider $v = w+h$, where  
\begin{equation}\label{w}
\begin{cases}
(-\Delta)^{m}w = f(x) \quad & \textmd{in }  B_r, \\
(-\Delta)^{i}w = 0 \quad & \textmd{on }  \partial B_r, ~ i=0, 1, \cdots, m-1 
\end{cases}
\end{equation}
and
\begin{equation}\label{h}
\begin{cases}
(-\Delta)^{m} h = 0 \quad & \textmd{in } B_r, \\
(-\Delta)^{i} h = (-\Delta)^{i}v  \quad &\textmd{on }\partial B_r, ~ i=0, 1, \cdots, m-1.  
\end{cases}
\end{equation} 
Let 
\[
N(x)= c_m \int_{B_r}  \ln\frac{5}{|x-y|} |f(y)|dy, \quad x\in \mathbb{R}^{2m}. 
\]
Then
\[
(-\Delta)^m N = |\bar{f}(x)| \quad \textmd{in }  \mathbb{R}^{2m}, 
\]
where $\overline{f}$ is the extension of $f$ to zero outside $B_r$. Note that $N(x) \geq 0$ for $x \in B_r$ since $\frac{5}{|x -y|} \geq 1$, $\forall x, y \in B_r$. Moreover, for $i=1,...,m-1$, 
\[
(-\Delta)^{i} N(x)= c_m^{\prime} \int_{B_r} |x-y|^{-2i} |f(y)|dy \geq  0  \quad \textmd{for }  x \in B_r. 
\] 
By the maximum principle, we obtain 
\begin{equation}\label{c2dh6j}
|w| \leq N \quad \textmd{in } B_r.  
\end{equation} 
Thus, for any $k>0$ satisfying $k < k_0:=\frac{2m}{c_m}$, we have
\begin{equation}\label{c9j26j}
\int_{B_r} e^{\frac{k |w(x)|}{\|f\|_{1}}} dx \leq \int_{B_r} e^{\frac{k N(x)}{\|f\|_{1}}} dx, 
\end{equation} 
where $\|f\|_{1}=\int_{B_r} |f(x)| dx$. Using the Jensen inequality, we get 
\[
\begin{aligned}
\int_{B_r} e^{\frac{k N(x)}{\|f\|_{1}}} dx & =  \int_{B_r} e^{\int_{B_r}  k c_m \ln\frac{5}{|x-y|} \frac{|f(y)|}{\|f\|_{1}}dy} dx \\
& \le \int_{B_r} \left( {\int_{B_r}  \Big(\frac{5}{|x-y|} \Big)^{k c_m} \frac{|f(y)|}{\|f\|_{1}}dy} \right)dx \\
& = \int_{B_r} \left( {\int_{B_r}  \Big(\frac{5}{|x-y|} \Big)^{k c_m} } dx\right) \frac{|f(y)|}{\|f\|_{1}}dy \\
& \le {\int_{B_r}  \Big(\frac{5}{|x|} \Big)^{k c_m} } dx \\
&=\frac{5^{k c_m}}{2m - kc_m} r^{2m - k c_m} < +\infty. \\
\end{aligned}    
\]
Hence, we obtain that for any $k \in (0, k_0) $ with $k_0=\frac{2m}{c_m} >0$, 
\begin{equation}\label{cblpgj}
\int_{B_r} e^{\frac{k |w(x)|}{\|f\|_{1}}} dx \leq \frac{5^{k c_m}}{2m - kc_m} r^{2m - k c_m} < +\infty. 
\end{equation}
In particular, 
\begin{equation}\label{l48gj}
\int_{B_r} e^{\frac{m |w(x)|}{c_m \|f\|_{1}}} dx < +\infty.  
\end{equation}

For  every $p>1$, let $0 < \varepsilon < \frac{m}{c_m p}$. We may split $f$ as $f=f_1 + f_2$ with $\|f_1\|_1 < \varepsilon$ and $f_2 \in L^\infty(B_r)$. Write $w=w_1 +w_2$, where $w_1$ and $w_2$ are solutions of  
\begin{equation}\label{wi=01}
\begin{cases}
(-\Delta)^{m}w_{1} = f_{1}(x) \quad &\textmd{in } B_r, \\
(-\Delta)^{i}w_{1} = 0 \quad &\textmd{on } \partial B_r,  ~i=0, 1, \cdots, m-1 
\end{cases}
\end{equation}
and
\begin{equation}\label{wi=02}
\begin{cases}
(-\Delta)^{m}w_{2} = f_{2}(x) \quad &\textmd{in } B_r, \\
(-\Delta)^{i}w_{2} = 0 \quad &\textmd{on } \partial B_r,  ~i=0, 1, \cdots, m-1, 
\end{cases}
\end{equation}
respectively. Applying estimate \eqref{l48gj} to $w_1$, we find $\int_{B_r} e^{\frac{m |w_1(x)|}{c_m \|f_1\|_{1}}} dx < +\infty$ and thus 
\[
\int_{B_r} e^{p|w_1(x)|} dx \leq \int_{B_r} e^{\frac{m |w_1(x)|}{c_m \varepsilon}} dx \leq \int_{B_r} e^{\frac{m |w_1(x)|}{c_m \|f_1\|_{1}}} dx < +\infty. 
\]
Since $h, w_2 \in L^\infty(B_r)$ and $|v| \leq |w_1| + |w_2| + |h|$, we have for any $p>1$ that 
\[
\int_{B_r} e^{p|v(x)|} dx < +\infty. 
\]
This establishes Lemma \ref{lemma3-1}.  
\end{proof}

Applying Lemma \ref{lemma3-1} to \eqref{vdfcm2}, we obtain $e^{v} \in L^{p}(B_{1/2})$ for every $p > 1$. Since $\alpha >-2m$, one can choose some $\varepsilon >0$ such that $|x|^{\alpha} \in L^{1+\varepsilon}(B_{1/2})$. Hence, there exists some $p_0>1$ such that 
\[
K e^{h} |x|^{\alpha} e^{v}  \in L^{p_0} (B_{1/2}). 
\] 
By the classical $W^{2m,p}$ estimates, we obtain $v \in W^{2m, p_0}(B_{1/2})$. The Sobolev embedding theorem implies that $v \in C^\gamma(B_{1/2})$ for some $0< \gamma<1$. Hence, the conclusion (ii) holds. Together with \eqref{S03pndju0}, we get 
\[
u(x) = v(x) + h(x) + \alpha \ln |x| \quad \textmd{for }  x \in B_{1/2} \setminus \{0\}, 
\]
where $\alpha>-2m$ and $\varphi := v+h \in C^\gamma(B_{1/2} )$ for some $0< \gamma<1$. The proof of Theorem \ref{mth} is completed. 
\end{proof}

\subsection{Isolated singularities of $Q$-curvature equation: odd dimensions} 

The method above can also be applied to the odd-dimensional case in Theorem \ref{mth-odd}. Since equation \eqref{meq} is nonlocal, we establish the following nonlocal version of Theorem \ref{jdbs}.    

\begin{lemma}\label{jdbs=odd}  
Suppose that $n\geq 3$ is an odd integer. Let $u \in \mathcal{L}_{\frac{n}{2}}(\mathbb{R}^n) \cap C^{n} (B_1 \setminus \{0\})$ be a solution of 
\begin{equation}\label{Poisson-odd}
(-\Delta)^{\frac{n}{2}} u=f(x) \quad \textmd{in }  B_1 \setminus \{0\} \subset \mathbb{R}^{n}, 
\end{equation}
where $f \in L^1(B_1)$. Then there exist constants $a_{\beta} \in \mathbb{R}$ with $|\beta| \leq n-1$, such that 
\begin{equation}\label{udbseqodd}
u(x) = v(x) + h(x) + \sum_{|\beta| \leqslant n-1}a_{\beta} D^{\beta} \phi(x) \quad \textmd{for }  B_1 \setminus \{0\}, 
\end{equation}
where $h \in \mathcal{L}_{\frac{n}{2}}(\mathbb{R}^n) \cap C^{\infty} \left(B_{1} \right)$ satisfies $(-\Delta)^{\frac{n}{2}} h=0$ in $B_{1} $, $\phi(x)= c_n  \ln {\frac{5}{\left|x\right|}}$ is a fundamental solution of $(-\Delta)^{\frac{n}{2}}$ in $\mathbb{R}^{n}$, and $v$ is defined by 
\begin{equation}\label{vmj=odd}
v(x) = c_n \int_{\{ |y|<1\}}  \ln {\frac{5}{\left|x-y\right|}} f(y) dy,  \quad x \in \mathbb{R}^n. 
\end{equation}
\end{lemma}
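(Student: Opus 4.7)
The plan is to adapt the distributional decomposition used in the proof of Theorem \ref{jdbs} to the nonlocal odd-dimensional setting, exploiting the facts that $u \in \mathcal{L}_{\frac{n}{2}}(\mathbb{R}^n)$ automatically gives $u \in L^1_{\textmd{loc}}(\mathbb{R}^n)$ (so the delicate spherical-averaging step in Theorem \ref{jdbs} is not needed here) and that $\phi(x)=c_n\ln(5/|x|)$ is the fundamental solution of $(-\Delta)^{\frac{n}{2}}$ on $\mathbb{R}^n$.

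First I would define $v$ by \eqref{vmj=odd}. Since $f\in L^1(B_1)$ and the kernel $\ln(5/|x-y|)$ is locally integrable in $y$ for each fixed $x$, Fubini yields $v\in L^1_{\textmd{loc}}(\mathbb{R}^n)$; the growth bound $|v(x)|\leq C(1+\ln|x|)$ at infinity then places $v \in \mathcal{L}_{\frac{n}{2}}(\mathbb{R}^n)$. Because $\phi$ is a fundamental solution, another Fubini argument gives the global identity
$$
\int_{\mathbb{R}^n} v(x)\,(-\Delta)^{\frac{n}{2}}\varphi(x)\,dx=\int_{B_1} f(x)\varphi(x)\,dx \quad\textmd{for every }\varphi\in C_c^\infty(\mathbb{R}^n).
$$

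Next I would set $w:=u-v$, so $w\in\mathcal{L}_{\frac{n}{2}}(\mathbb{R}^n)\cap L^1_{\textmd{loc}}(B_1)$. Subtracting the distributional identities satisfied by $u$ (via \eqref{odd=02} with the appropriate right-hand side) and by $v$, tested against $\varphi\in C_c^\infty(B_1\setminus\{0\})$, shows $\int_{\mathbb{R}^n} w\,(-\Delta)^{\frac{n}{2}}\varphi\,dx=0$ on that class. The key observation is that the same pairing is in fact absolutely convergent for every $\varphi\in C_c^\infty(B_1)$ — precisely because $w\in\mathcal{L}_{\frac{n}{2}}$ — so the map
$$
T(\varphi):=\int_{\mathbb{R}^n} w(x)\,(-\Delta)^{\frac{n}{2}}\varphi(x)\,dx
$$
is a well-defined distribution on $B_1$ whose support lies in $\{0\}$. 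By the classical structure theorem for distributions with point support, there exist an integer $K\geq 0$ and constants $a_\beta\in\mathbb{R}$ with $T=\sum_{|\beta|\leq K} a_\beta D^\beta\delta_0$.

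Finally, set $h:=w-\sum_{|\beta|\leq K} a_\beta D^\beta\phi$; then $(-\Delta)^{\frac{n}{2}}h=0$ distributionally on $B_1$. Interior regularity for the fractional $\frac{n}{2}$-Laplacian applied to an $\mathcal{L}_{\frac{n}{2}}$-distributional solution of the homogeneous equation upgrades $h$ to $C^\infty(B_1)$. Since $w\in L^1_{\textmd{loc}}(B_1)$ while $D^\beta\phi(x)\sim |x|^{-|\beta|}$ is not locally integrable for $|\beta|\geq n$, the identity $\sum a_\beta D^\beta\phi=w-h\in L^1_{\textmd{loc}}(B_1)$ forces $a_\beta=0$ for all $|\beta|\geq n$, yielding \eqref{udbseqodd}. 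The inclusion $h\in\mathcal{L}_{\frac{n}{2}}(\mathbb{R}^n)$ then follows from $w\in\mathcal{L}_{\frac{n}{2}}$ together with the elementary check that $D^\beta\phi\in\mathcal{L}_{\frac{n}{2}}(\mathbb{R}^n)$ for every $|\beta|\leq n-1$.

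The main obstacle is not any single computation but the bookkeeping imposed by the nonlocal operator: unlike in Theorem \ref{jdbs}, test functions $\varphi\in C_c^\infty(B_1)$ produce $(-\Delta)^{\frac{n}{2}}\varphi$ with full support in $\mathbb{R}^n$, so extending the equation $(-\Delta)^{\frac{n}{2}}w=0$ from $B_1\setminus\{0\}$ to a distribution on all of $B_1$ requires carrying the $\mathcal{L}_{\frac{n}{2}}$-integrability of every object (including the candidates $D^\beta\phi$ with $|\beta|\leq n-1$) throughout the argument. Once this weighted integrability is tracked consistently, the classical point-support structure theorem together with the non-integrability threshold $|x|^{-n}$ closes the argument exactly as in the local case.
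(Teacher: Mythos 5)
Your proposal follows essentially the same route as the paper's proof: define $v$ by the Newtonian-type potential, check $v\in\mathcal{L}_{n/2}(\mathbb{R}^n)$ and $(-\Delta)^{n/2}v=f$ distributionally via Fubini, pass to $w=u-v$, extend $(-\Delta)^{n/2}w$ to a distribution $T$ on $B_1$ supported at the origin, invoke the point-support structure theorem, use interior regularity for the homogeneous fractional equation to get $h\in C^\infty(B_1)$, and kill the $|\beta|\geq n$ terms by local integrability of $w-h$. Your added remark that the pairing $\varphi\mapsto\int w\,(-\Delta)^{n/2}\varphi$ is absolutely convergent for all $\varphi\in C_c^\infty(B_1)$ precisely because $w\in\mathcal{L}_{n/2}$ makes explicit what the paper summarizes as ``$(-\Delta)^{n/2}w$ is a tempered distribution, restrict to $B_1$''; the two formulations amount to the same thing, and your version is if anything slightly more careful about why the extension is legitimate.
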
     

\begin{proof}
Let $v$ be defined as in \eqref{vmj=odd}. Then $v \in L_{\textmd{loc}}^1(\mathbb{R}^n)$ and 
\[
|v(x)| \leq C \ln |x| \quad \textmd{for large } |x|. 
\]
This implies that $v \in \mathcal{L}_{\frac{n}{2}}(\mathbb{R}^n) $ and 
\[
\begin{aligned}
\int_{\mathbb{R}^n} v(x) (-\Delta)^{\frac{n}{2}} \varphi(x) dx &=   \int_{\{ |y|<1\}}  \left( \int_{\mathbb{R}^n} c_n \ln {\frac{5}{\left|x-y\right|}} (-\Delta)^{\frac{n}{2}} \varphi(x) dx \right) f(y) dy \\
& = \int_{\{ |y|<1\}}  \varphi(y)   f(y) dy \quad \textmd{for any } \varphi \in C_c^\infty(B_1). 
\end{aligned}
\]
That is, $v$ is a solution of 
\[
(-\Delta)^{\frac{n}{2}} v = f \quad \textmd{in }  B_1. 
\]
Let $w = u -v $. Then $w \in \mathcal{L}_{\frac{n}{2}}(\mathbb{R}^n) $ satisfies 
\[
(-\Delta)^{\frac{n}{2}} w = 0 \quad \textmd{in }  B_1 \setminus \{0\}. 
\]
Notice that $(-\Delta)^{\frac{n}{2}} w$ is a tempered distribution in $\mathcal{S}^\prime(\mathbb{R}^n)$. We restrict $(-\Delta)^{\frac{n}{2}} w$ to $B_1$, and the resulting distribution is denoted as $T$ whose support is contained in $\{0\}$. By a classical result about distribution, there exists a nonnegative integer $K$ such that 
\[
T =\sum_{|\beta| \leq K} a_\beta D^\beta \delta_0, 
\]
where $\delta_0$ is the Dirac measure at $0$. Let $h(x) := w(x) - \sum_{|\beta| \leqslant K} a_{\beta} D^{\beta} \phi(x) $. Then $h \in \mathcal{S}^\prime(\mathbb{R}^n)$ satisfies \[
(-\Delta)^{\frac{n}{2}} h =  0 \quad \textmd{in } B_1. 
\] 
By the regularity result (see, e.g., \cite{Garofalo,Sil}), we know that $h \in C^\infty(B_1)$. Thus 
\[
\sum_{|\beta| \leqslant K} a_{\beta} D^{\beta} \phi(x) = w(x) -h(x) \in L_{\textmd{loc}}^1(B_1) 
\]
and so for any $k \geq n$,  we have $\sum_{|\beta| = k} a_{\beta} D^{\beta} \phi(x) \equiv 0$. Hence, we obtain that for some constants $a_{\beta} \in \mathbb{R}$ with $|\beta| \leq n-1$, 
\begin{equation}
w(x)=h(x) + \sum_{|\beta| \leqslant n-1}a_{\beta} D^{\beta} \phi(x)  \quad \textmd{for }  x \in B_1 \setminus \{0\}, 
\end{equation}
where $h \in \in \mathcal{L}_{\frac{n}{2}}(\mathbb{R}^n) \cap C^{\infty}(B_{1})$ is the solution of $(-\Delta)^{\frac{n}{2}}h=0$ in $B_{1}$. This completes the proof of Lemma \ref{jdbs=odd}. 
\end{proof}

\begin{proof}[Proof of Theorem \ref{mth-odd}] 
In $\mathbb{R}^n$, a fundamental solution of $(-\Delta)^{\frac{n}{2}}$ is given by $\phi(x) =c_n \ln \frac{5}{|x|}$, where $c_n>0$ is a constant. Suppose that $K\in L^{\infty} (B_1)$ is nonnegative and\ $u \in \mathcal{L}_{\frac{n}{2}}(\mathbb{R}^n) \cap C^n(B_1 \setminus \{0\})$ satisfies \eqref{la=abc}. 
By Lemma \ref{jdbs=odd}, we have 
\begin{equation}\label{jkdvi7}
u(x) = v(x) + h(x) + \sum_{|\beta| \leqslant n-1} a_{\beta} D^{\beta} \phi(x) \quad \textmd{for }  x \in B_1 \setminus \{0\}, 
\end{equation}
where $h \in C^{\infty}\left(B_{1} \right)$ and
\begin{equation}\label{3jh7=odd}
v(x) = c_n \int_{\{ |y|<1\}}  \ln {\frac{5}{\left|x-y\right|}} K(y) e^{u(y)}dy.  
\end{equation}
We will prove the following two conclusions. 
\begin{enumerate}[label = \rm(\roman*)]
\item $\sum_{1\leq |\beta| \leqslant n -1} a_{\beta} D^{\beta} \phi(x) \equiv 0$ and $a_0 < \frac{n}{c_n}$.   

\item $v \in C_{\textmd{loc}}^\gamma(B_1)$ for some $0< \gamma<1$. 
\end{enumerate}

The proof of the conclusion (i) is similar to that of Theorem \ref{mth}, so we omit the details. Next, we prove the conclusion (ii).

We first show that $e^{p v} \in L^1(B_1)$ for any $ p > 1$. For any $p>1$, take $0 < \varepsilon < \frac{n}{2c_n p}$. We may split $K e^u$ as $K e^u=f_1 + f_2$ with $\|f_1\|_1 < \varepsilon$ and $f_2 \in L^\infty(B_1)$. Then $v$ can be written as 
$v=v_1 + v_2$, where
\[
v_i(x) := c_n \int_{\{ |y|<1\}}  \ln {\frac{5}{\left|x-y\right|}} f_i(y) dy, \quad i=1, 2.
\]
By the Jensen inequality, we obtain 
\[
\begin{aligned}
\int_{B_1} e^{p |v_1(x)| } dx & \leq \int_{B_1} e^{\int_{B_1}  \frac{n}{2} \ln\frac{5}{|x-y|} \frac{|f_1(y)|}{\|f_1\|_1} dy} dx \\
& \le \int_{B_1} \left( {\int_{B_1}  \Big(\frac{5}{|x-y|} \Big)^{\frac{n}{2}} \frac{|f_1(y)|}{\|f_1\|_1} dy} \right)dx \\
& = \int_{B_1} \left( {\int_{B_1}  \Big(\frac{5}{|x-y|} \Big)^{\frac{n}{2}} } dx\right) \frac{|f_1(y)|}{\|f_1\|_1} dy \\
& \le {\int_{B_1}  \Big(\frac{5}{|x|} \Big)^{\frac{n}{2}} } dx < +\infty. 
\end{aligned}    
\]
On the other hand, we also have  
\[
\|v_2\|_{L^\infty(B_1)} \leq C \|\ln x\|_{L^1(B_1)} \|f_2\|_{L^\infty(B_1) } < +\infty.  
\]
Thus, for any $ p > 1$, we have $e^{p v} \in L^1 (B_1)$. Let  
$$
\alpha := -c_n a_0 > -n.  
$$ 
Then one can choose some $\varepsilon_0 >0$ such that $|x|^{\alpha} \in L^{1+\varepsilon_0}(B_1)$. Hence, there exists some $p_0>1$ such that 
\[
Ke^u = K e^{h} |x|^{\alpha} e^{v}  \in L^{p_0} (B_{1/2}). 
\] 
By the classical potential estimate (see, e.g., Gilbarg-Trudinger \cite{GT}), we obtain that $v \in C^\gamma(B_{1/2})$ for some $0< \gamma<1$. The conclusion (ii) is proved. Hence, we get 
\[
u(x) = v(x) + h(x) + \alpha \ln |x| \quad \textmd{for }  x \in B_{1/2} \setminus \{0\}, 
\]
where $\alpha>-n$ and $\varphi := v+h \in C^\gamma(B_{1/2} )$ for some $0< \gamma<1$. The proof of Theorem \ref{mth-odd} is completed. 
\end{proof}

\bigskip

\noindent  Hui Yang 

\noindent School of Mathematical Sciences, CMA-Shanghai \\
Shanghai Jiao Tong University, Shanghai 200240, China \\[1mm] 
Email: \textsf{hui-yang@sjtu.edu.cn}

\bigskip

\noindent  Ronghao Yang

\noindent School of Mathematical Sciences \\
Shanghai Jiao Tong University, Shanghai 200240, China \\[1mm] 
Email: \textsf{ylqfyrh5218@sjtu.edu.cn}

\end{document}